\newtheorem{theorem}{Theorem}
\numberwithin{subcase}{case}
\title{On spanning trees with high internal degree}
\author{Codru\unichar{355} Grosu\thanks{This research was supported by the Deutsche Forschungsgemeinschaft within the research training group `Methods for Discrete Structures' (GRK 1408).}}
\affil{\small{grosu.codrut@gmail.com}}
\date{}
\begin{document}

\maketitle

\abstract{Alon and Wormald showed that any graph with minimum degree $d$ contains a spanning star forest in which every connected component is of size at least $\Omega((d/\log d)^{1/3})$. They asked if any connected graph with minimum degree at least $d$ has a spanning tree in which every internal vertex has degree at least $cd/\log d$, for some absolute constant $c > 0$.

We give a simple example showing that this is not the case.
}

\section{\normalsize The example}

Let $G$ be any graph. A \textit{star factor} of $G$ is a spanning forest of $G$ in which every connected component is a star. Answering a question of Havet, Klazar, Kratochvil, Kratsch and Liedloff (\cite{Havet11}), Alon and Wormald (\cite{AlonWormald10}) proved the following result.

\begin{theorem}[\cite{AlonWormald10}]
\label{thm:alon}
There exists a $c > 0$ such that for all $d \geq 2$, every graph
with minimum degree $d$ contains a star factor in which every star has at least $c\left(\frac{d}{\log d}\right)^{1/3}$ edges.
\end{theorem}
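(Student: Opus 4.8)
The plan is to argue probabilistically. Fix a target star size $k:=\lfloor c(d/\log d)^{1/3}\rfloor$ and choose a random set $D$ of \emph{centres} by placing each vertex of $G$ into $D$ independently with probability $p$ (to be tuned), then let every remaining vertex attach itself as a leaf to an adjacent centre: a map $f\colon V(G)\setminus D\to D$ with $f(v)\in N(v)$, the star at $c$ being $\{c\}\cup f^{-1}(c)$. One has to show that with positive probability $D$ and $f$ can be chosen so that $|f^{-1}(c)|\ge k$ for all $c$. It helps to record the deterministic criterion first: for a fixed $D$, such an $f$ exists if and only if $D$ is dominating and $|N(A)\setminus D|\ge k|A|$ for every $A\subseteq D$. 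This is precisely the feasibility condition for a degree-constrained bipartite subgraph, read off from a flow in the network with a demand of $k$ at each centre, unit capacities on the $G$-edges between $D$ and $V(G)\setminus D$, and a unit capacity from each non-centre to the sink.

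So it is enough to find a dominating $D$ obeying that Hall-type inequality. For a random $D$: a fixed vertex is left undominated with probability at most $(1-p)^{d}\le e^{-pd}$; meanwhile a centre $c$ harvests, in expectation, about $\sum_{v\in N(c)}1/(p\deg(v))$ leaves, because a non-centre $v$ spreads its choice roughly uniformly over its $\approx p\deg(v)$ central neighbours, and when the degrees near $c$ are of order $d$ this is $\gtrsim \deg(c)/(pd)\ge 1/p$. So $p$ should be small enough that $1/p$ is a large multiple of $k$, and at the same time large enough that $e^{-pd}$ is tiny and the leaf-choices are diffuse enough to behave well; balancing these against the polynomial-in-$d$ slack used below is what pins the exponent at $1/3$ — concretely one can aim for $p\asymp k^{2}\log d/d$, where $1/p=d/(k^{2}\log d)$ beats $k$ by a constant factor exactly when $k\asymp(d/\log d)^{1/3}$.

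Three further ingredients finish the plan. (i) Concentration: $|f^{-1}(c)|$ depends on $D$ only inside the radius-$2$ ball around $c$, and since the choices are diffuse a bounded-differences inequality should bound the probability it drops below $k$ by something polynomially small in $d$; likewise the Hall inequality can fail only through a ``clustered'' set $A$ — one forcing $|A|\gtrsim d/k$ with $A\cup N(A)$ internally dense — which is very unlikely for a random $D$. (ii) Repair: an undominated vertex has all $\ge d$ of its neighbours outside $D$, hence can be turned into a fresh centre and given $k$ new leaves, and any centre still below $k$ can be topped up by rerouting along augmenting paths — success of these repairs is once more the Hall condition, now with a little slack. (iii) Since $|V(G)|$ is unbounded, and one cannot even reduce to bounded maximum degree (witness $K_{d,n}$), a global union bound is out; so one peels off by hand any genuinely large internally dense subgraph $B$ — covering it greedily by stars of size $\ge k$ using only its internal density, which exceeds $k$ by far whenever $k\ll\sqrt d$, then deleting $B$, chosen maximal so that $G-B$ still has minimum degree $\Omega(d)$, and recursing on $G-B$ — and applies the Lov\'{a}sz Local Lemma to the local bad events (i)--(ii) on what remains.

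The hardest part, I expect, is exactly this passage to a local argument: checking that, once the dense pieces are removed, each bad event really depends on $D$ only in a bounded neighbourhood and overlaps only boundedly many others, so that the Local Lemma applies with the parameters above, and that the greedy treatment of dense pieces and the recursion mesh with the random step without eroding the constants. The concentration estimate for the harvest $|f^{-1}(c)|$ is the other delicate point, the leaf-choices being correlated through the shared randomness of $D$ — which is precisely why $p$ must be kept large enough to keep those choices diffuse, at the price, paid in the exponent, of more collisions among competing centres.
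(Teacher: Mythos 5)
First, a point of reference: the paper you were given does not prove Theorem \ref{thm:alon} at all --- it is imported from Alon and Wormald \cite{AlonWormald10} and used only as motivation, so there is no in-paper proof to compare against. Judged on its own terms, your text is a plan rather than a proof, and it has genuine gaps at exactly the places you yourself flag. The one step that is actually complete is the deterministic reduction: for fixed $D$, the desired assignment exists iff $D$ dominates $G$ and $|N(A)\setminus D|\ge k|A|$ for all $A\subseteq D$ (split each centre into $k$ copies, apply Hall's theorem, then distribute the remaining vertices using domination). That is correct, and your heuristic $p\asymp k^{2}\log d/d$ does locate the true source of the exponent $1/3$. Everything after that is asserted, not proved.

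Concretely: (a) the Local Lemma requires each bad event to depend on boundedly many others, and with unbounded maximum degree the event ``centre $c$ harvests fewer than $k$ leaves'' can depend on the coin flips of arbitrarily many vertices; your proposed remedy --- peel off a maximal dense piece $B$ so that $G-B$ keeps minimum degree $\Omega(d)$ --- is not shown to be realizable, since deleting $B$ may ruin the degrees of vertices outside $B$, and iteratively absorbing such vertices into $B$ is not shown to terminate with a $B$ that is itself coverable by stars of size $k$. (b) The concentration claim is not an off-the-shelf bounded-differences bound: the harvest of $c$ is essentially a sum of terms $1/|N(v)\cap D|$ over neighbours $v\notin D$ of $c$, and the denominators are themselves random and can be small; one must first control the event that every relevant vertex has close to $p\deg(v)$ central neighbours, and these auxiliary events must also be wired into the Local Lemma. (c) The ``repair'' step is circular as stated: promoting an undominated vertex to a centre and rerouting along augmenting paths changes $D$ and can invalidate both the Hall condition and other centres' harvests; the standard resolution is to make domination one of the bad events handled by the Local Lemma rather than to repair afterwards. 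Your architecture (random centres, Hall-type feasibility, Local Lemma, separate treatment of dense or high-degree parts) is indeed the shape of Alon and Wormald's argument, but none of these three quantitative pillars is established in your write-up, so the proof is not complete.
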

As noted by Alon and Wormald, any $n$-vertex connected graph of minimum degree $d$ has a spanning tree with at least $n - O(n\frac{\log d}{d})$ leaves (see \cite{West91}). In view of this and Theorem \ref{thm:alon}, they asked if there exists a constant $c > 0$, such that any connected graph of minimum degree at least $d$ has a spanning tree in which every internal vertex is of degree at least $c\frac{d}{\log d}$.

We now describe an example showing that this need not hold.

\begin{theorem}
\label{thm:main}
For any $d \geq 2$ and $n \geq d(d+2)$, there exists a connected graph on $n$ vertices with minimum degree $d$, for which every spanning tree has an internal vertex of degree $2$.
\end{theorem}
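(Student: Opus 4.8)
The plan is to realise $G$ as a small central clique carrying several large pendant cliques. Take $d+1$ vertex-disjoint cliques $C_0,C_1,\dots,C_d$, where $C_0$ has vertex set $\{v_1,\dots,v_d\}$, the clique $C_1$ has $n-d(d+1)+1$ vertices, and each of $C_2,\dots,C_d$ has $d+1$ vertices; in each $C_i$ ($1\le i\le d$) single out one vertex $r_i$. Let $G$ be obtained from $C_0\sqcup C_1\sqcup\dots\sqcup C_d$ by adding the $d$ edges $v_ir_i$. Summing sizes, $G$ is connected with exactly $n$ vertices. As for degrees: since $n\ge d(d+2)$ we have $|V(C_i)|\ge d+1$ for every $i\ge 1$, so every vertex of $C_i$ ($i\ge1$) has degree at least $|V(C_i)|-1\ge d$, while $r_i$ even has degree $|V(C_i)|\ge d+1$; and each $v_i$ has degree $(d-1)+1=d$. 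Hence $\delta(G)=d$, attained at $v_1,\dots,v_d$.

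The heart of the matter will be that every spanning tree of $G$ restricts to a spanning tree of $C_0$ on $V(C_0)$. Indeed, let $T$ be any spanning tree of $G$. Each $v_ir_i$ is a bridge of $G$, since it is the only edge of $G$ with exactly one endpoint in $V(C_i)$; hence $v_ir_i\in T$ for all $i$. Consequently no $T$-path joining two vertices of $C_0$ can leave $V(C_0)$: the only edges out of $V(C_0)$ are the $v_ir_i$, and once such a path entered some $C_i$ it could only return by traversing the same bridge a second time, impossible in a tree. Therefore the subgraph of $T$ induced on $V(C_0)$ is connected, hence a spanning tree of $C_0\cong K_d$. Since $d\ge 2$, a tree on $d$ vertices has a leaf, say $v_i$, so $v_i$ is incident with exactly one $T$-edge inside $V(C_0)$. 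But the only neighbour of $v_i$ outside $C_0$ is $r_i$, and $v_ir_i\in T$, so $\deg_T(v_i)=2$. Thus $v_i$ is an internal vertex of $T$ of degree $2$, and since $T$ was arbitrary the theorem follows.

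I do not anticipate any genuine obstacle --- the statement is billed as a ``simple example'', and the argument above is short. The one step worth stating carefully is that $T$ restricted to $V(C_0)$ is connected; this uses nothing more than ``every bridge lies in every spanning tree'' followed by ``a tree on at least two vertices has a leaf''. The remaining points --- that the vertex count is exactly $n$, that $\delta(G)=d$, and that the hypothesis $n\ge d(d+2)$ is precisely what makes $C_1$ large enough --- are routine bookkeeping. (One could equally keep all $C_i$ of size $d+1$ when $n=d(d+2)$ and enlarge a single pendant clique for larger $n$; the analysis is identical since it only ever looks at $C_0$.)
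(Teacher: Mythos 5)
Your proof is correct and follows essentially the same construction and argument as the paper: a central $K_d$ with $d$ pendant cliques attached by bridges (one pendant clique absorbing the surplus vertices), forcing every spanning tree to contain the bridges and to induce a tree on the central clique, whose leaf is then an internal vertex of degree $2$. The only cosmetic difference is that the paper attaches the large clique to a distinguished vertex $x$ and the $(d+1)$-cliques to the rest, whereas you let $C_1$ be the large one; the analysis is identical.
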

\begin{proof}
Take a complete graph $K$ on $d$ vertices and fix some vertex $x \in K$. For every vertex $y \in K \setminus \{x\}$, add a complete graph $K_y$ on $d+1$ vertices, and join $y$ to some vertex $z_y \in K_y$. Finally, add a complete graph $W$ on $n - (d-1)(d+1) - d$ vertices, and join $x$ to some vertex $z_x \in W$. Let $G$ be the resulting graph.

As $|W| \geq d+1$, any vertex in $G$ has degree at least $d$, and every vertex in $K$ has degree exactly $d$.

Let $T$ be any spanning tree of $G$. Then $T$ must contain all the edges $\{(u, z_u) : u \in K\}$, and furthermore all the vertices in $K$ are internal vertices of $T$. Moreover, $T$ induces a tree $T[K]$ on $K$. Let $\ell$ be any leaf of $T[K]$. Then $\ell$ is an internal vertex of $T$ of degree $2$. This completes the proof.
\end{proof}

The graph $G$ in Theorem \ref{thm:main} can also be modified to be regular of degree $d$ (albeit for a larger lower bound on $n$).

\bibliographystyle{abbrv}
\bibliography{bibl}
\end{document}